\title{Choquet extension of non-monotone submodular setfunctions}
\author{László Lovász\footnote{Alfréd Rényi Institute of Mathematics, Budapest, Hungary.
Research supported by ERC Synergy Grant No.~810115.}}
\date{}
\newtheorem{theorem}{Theorem}[section]
\newtheorem{lemma}[theorem]{Lemma}
\newtheorem{corollary}[theorem]{Corollary}
\newtheorem{remark}[theorem]{Remark}
\newtheorem{example}[theorem]{Example}
\newtheorem{exc}[theorem]{Exercise}
\newenvironment{proof}{\medskip\noindent{\bf Proof. }}{\hfill$\square$\medskip}
\newenvironment{proof*}[1]{\medskip\noindent{\bf Proof of #1.}}{\hfill$\square$\medskip}
\def\acup{{\cup\kern-0.8ex{}^\land}}
\def\bcap{{\cap\kern-0.8ex{}_\lor}}
\begin{document}

\def\AA{\mathcal{A}}\def\BB{\mathcal{B}}\def\CC{\mathcal{C}}
\def\DD{\mathcal{D}}\def\EE{\mathcal{E}}\def\FF{\mathcal{F}}
\def\GG{\mathcal{G}}\def\HH{\mathcal{H}}\def\II{\mathcal{I}}
\def\JJ{\mathcal{J}}\def\KK{\mathcal{K}}\def\LL{\mathcal{L}}
\def\MM{\mathcal{M}}\def\NN{\mathcal{N}}\def\OO{\mathcal{O}}
\def\PP{\mathcal{P}}\def\QQ{\mathcal{Q}}\def\RR{\mathcal{R}}
\def\SS{\mathcal{S}}\def\TT{\mathcal{T}}\def\UU{\mathcal{U}}
\def\VV{\mathcal{V}}\def\WW{\mathcal{W}}\def\XX{\mathcal{X}}
\def\YY{\mathcal{Y}}\def\ZZ{\mathcal{Z}}

\def\Ab{\mathbf{A}}\def\Bb{\mathbf{B}}\def\Cb{\mathbf{C}}
\def\Db{\mathbf{D}}\def\Eb{\mathbf{E}}\def\Fb{\mathbf{F}}
\def\Gb{\mathbf{G}}\def\Hb{\mathbf{H}}\def\Ib{\mathbf{I}}
\def\Jb{\mathbf{J}}\def\Kb{\mathbf{K}}\def\Lb{\mathbf{L}}
\def\Mb{\mathbf{M}}\def\Nb{\mathbf{N}}\def\Ob{\mathbf{O}}
\def\Pb{\mathbf{P}}\def\Qb{\mathbf{Q}}\def\Rb{\mathbf{R}}
\def\Sb{\mathbf{S}}\def\Tb{\mathbf{T}}\def\Ub{\mathbf{U}}
\def\Vb{\mathbf{V}}\def\Wb{\mathbf{W}}\def\Xb{\mathbf{X}}
\def\Yb{\mathbf{Y}}\def\Zb{\mathbf{Z}}

\def\ab{\mathbf{a}}\def\bb{\mathbf{b}}\def\cb{\mathbf{c}}
\def\db{\mathbf{d}}\def\eb{\mathbf{e}}\def\fb{\mathbf{f}}
\def\gb{\mathbf{g}}\def\hb{\mathbf{h}}\def\ib{\mathbf{i}}
\def\jb{\mathbf{j}}\def\kb{\mathbf{k}}\def\lb{\mathbf{l}}
\def\mb{\mathbf{m}}\def\nb{\mathbf{n}}\def\ob{\mathbf{o}}
\def\pb{\mathbf{p}}\def\qb{\mathbf{q}}\def\rb{\mathbf{r}}
\def\sb{\mathbf{s}}\def\tb{\mathbf{t}}\def\ub{\mathbf{u}}
\def\vb{\mathbf{v}}\def\wb{\mathbf{w}}\def\xb{\mathbf{x}}
\def\yb{\mathbf{y}}\def\zb{\mathbf{z}}

\def\Abb{\mathbb{A}}\def\Bbb{\mathbb{B}}\def\Cbb{\mathbb{C}}
\def\Dbb{\mathbb{D}}\def\Ebb{\mathbb{E}}\def\Fbb{\mathbb{F}}
\def\Gbb{\mathbb{G}}\def\Hbb{\mathbb{H}}\def\Ibb{\mathbb{I}}
\def\Jbb{\mathbb{J}}\def\Kbb{\mathbb{K}}\def\Lbb{\mathbb{L}}
\def\Mbb{\mathbb{M}}\def\Nbb{\mathbb{N}}\def\Obb{\mathbb{O}}
\def\Pbb{\mathbb{P}}\def\Qbb{\mathbb{Q}}\def\Rbb{\mathbb{R}}
\def\Sbb{\mathbb{S}}\def\Tbb{\mathbb{T}}\def\Ubb{\mathbb{U}}
\def\Vbb{\mathbb{V}}\def\Wbb{\mathbb{W}}\def\Xbb{\mathbb{X}}
\def\Ybb{\mathbb{Y}}\def\Zbb{\mathbb{Z}}

\def\Af{\mathfrak{A}}\def\Bf{\mathfrak{B}}\def\Cf{\mathfrak{C}}
\def\Df{\mathfrak{D}}\def\Ef{\mathfrak{E}}\def\Ff{\mathfrak{F}}
\def\Gf{\mathfrak{G}}\def\Hf{\mathfrak{H}}\def\If{\mathfrak{I}}
\def\Jf{\mathfrak{J}}\def\Kf{\mathfrak{K}}\def\Lf{\mathfrak{L}}
\def\Mf{\mathfrak{M}}\def\Nf{\mathfrak{N}}\def\Of{\mathfrak{O}}
\def\Pf{\mathfrak{P}}\def\Qf{\mathfrak{Q}}\def\Rf{\mathfrak{R}}
\def\Sf{\mathfrak{S}}\def\Tf{\mathfrak{T}}\def\Uf{\mathfrak{U}}
\def\Vf{\mathfrak{V}}\def\Wf{\mathfrak{W}}\def\Xf{\mathfrak{X}}
\def\Yf{\mathfrak{Y}}\def\Zf{\mathfrak{Z}}

\def\alphab{{\boldsymbol\alpha}}\def\betab{{\boldsymbol\beta}}
\def\gammab{{\boldsymbol\gamma}}\def\deltab{{\boldsymbol\delta}}
\def\etab{{\boldsymbol\eta}}\def\zetab{{\boldsymbol\zeta}}
\def\kappab{{\boldsymbol\kappa}}
\def\lambdab{{\boldsymbol\lambda}}\def\mub{{\boldsymbol\mu}}
\def\nub{{\boldsymbol\nu}}\def\pib{{\boldsymbol\pi}}
\def\rhob{{\boldsymbol\rho}}\def\sigmab{{\boldsymbol\sigma}}
\def\taub{{\boldsymbol\tau}}\def\epsb{{\boldsymbol\varepsilon}}
\def\epsilonb{{\boldsymbol\epsilon}} \def\inb{{\boldsymbol\in}}
\def\phib{{\boldsymbol\varphi}}\def\psib{{\boldsymbol\psi}}
\def\xib{{\boldsymbol\xi}}\def\omegab{{\boldsymbol\omega}}
\def\intl{\int\limits}
\def\sqprod{\mathbin{\square}}

\def\ybb{\mathbbm{y}}
\def\one{{\mathbbm1}}
\def\two{{\mathbbm2}}
\def\R{\Rbb}\def\Q{\Qbb}\def\Z{\Zbb}\def\N{\Nbb}\def\C{\Cbb}
\def\wh{\widehat}
\def\wt{\widetilde}
\def\var{\omega}
\def\eps{\varepsilon}
\def\sgn{{\rm sign}}
\def\dd{{\sf d}}
\def\Rv{\overleftarrow}
\def\Pr{{\sf P}}
\def\E{{\sf E}}
\def\T{^{\sf T}}
\def\proofend{\hfill$\square$}
\def\id{\hbox{\rm id}}
\def\conv{\hbox{\rm conc}}
\def\lin{\hbox{\rm lib}}
\def\conv{\hbox{\rm conc}}
\def\Dim{\hbox{\rm Dim}}
\def\const{\hbox{\rm const}}
\def\vol{\text{\rm vol}}
\def\diam{\text{\rm diam}}
\def\corank{\hbox{\rm cork}}
\def\cork{\hbox{\rm cork}}
\def\OR{\mathcal{OR}}
\def\GOR{\mathcal{GOB}}
\def\NOR{\mathcal{NOR}}
\def\LGOR{\mathcal{ALGOR}}
\def\cro{\text{\rm CR}}
\def\supp{\text{\rm sup}}
\def\grad{\text{\rm grad}}
\def\ro{\overline{\rho}}
\def\rk{\text{\rm rk}}
\def\srk{\hbox{\rm src}}
\def\diag{{\rm diag}}
\def\pw{{\sf w}_\text{\rm prod}}
\def\tw{{\sf w}_\text{\rm tree}}
\def\aw{{\sf w}_\text{\rm alb}}
\def\bw{{\sf bow}}
\def\ld{{\sf d}_{\rm loc}}
\def\hd{{\sf d}_{\rm har}}
\def\tv{\text{\rm tv}}
\def\Tr{\hbox{\rm Tar}}
\def\tr{\hbox{\rm tr}}
\def\Prob{\hbox{\rm Pr}}
\def\bl{\text{{\rm bl}}}
\def\abl{\hbox{{\rm abl}}}
\def\Id{\hbox{\rm Id}}
\def\aff{\text{\rm ra}}
\def\MC{\CC_{\max}}
\def\Inf{\text{\sf Inf}}
\def\Str{\text{\sf Str}}
\def\Rig{\text{\sf Rig}}
\def\Mat{\text{\sf Mat}}
\def\comm{{\sf comm}}
\def\maxcut{{\sf maxcut}}
\def\disc{\text{\sf disc}}
\def\cond{\Phi}
\def\val{\text{\sf val}}
\def\dist{d_{\rm qu}}
\def\dhaus{d_{\rm haus}}
\def\dlp{d_{\rm LP}}
\def\dact{d_{\rm act}}
\def\Ker{{\rm Ker}}
\def\Rng{{\rm Rng}}
\def\gdim{{\rm gdim}}
\def\gap{\text{\rm gap}}
\def\intl{\int\limits}
\def\et{\qquad\text{and}\qquad}
\def\fin{\text{\sf fin}}
\def\Bd{{\sf Bd}}
\def\ba{{\sf ba}}
\def\ca{{\sf ca}}
\def\matp{{\sf mc}}
\def\basp{{\sf bmc}}
\def\Find{{\sf Find}}
\def\Fbas{{\sf Fbas}}

\def\blim{{\sf blim}}
\def\sep{{\sf sep}}
\def\Hom{{\rm Hom}}
\def\bp{{\sf bp}}
\def\fg{\varphi}
\def\fgx{\varphi^{\lor\omega}}
\def\lc{{\rm lc}}
\def\li{{\rm li}}
\def\ui{{\rm ui}}
\def\ls{{\rm ls}}
\def\us{{\rm us}}
\def\Haus{\text{\rm Haus}}
\def\dbwedge{\,\doublebarwedge\,}
\def\scdot{\!\cdot\!}
\def\stimes{\!\times\!}
\def\upr{{\textstyle\prod_\omega}}

\long\def\ignore#1{}

\def\QR{{R^{cc}}}

\def\url{}

\maketitle

{\it Keywords: submodular setfunction, measure, Choquet extension}

{\it AMS subject classification: 05C63, 05B35, 28E99, 52B40}

\addtolength{\baselineskip}{2pt}

\tableofcontents

\newpage

\begin{abstract}
In a seminal paper, Choquet introduced an integral formula to extend a monotone
increasing setfunction on a sigma-algebra to a (nonlinear) functional on
bounded measurable functions. The most important special case is when the
setfunction is submodular; then this functional is convex (and vice versa). In
the finite case, an analogous extension was introduced by this author; this is
a rather special case, but no monotonicity was assumed. In this note we show
that Choquet's integral formula can be applied to all submodular setfunctions,
and the resulting functional is still convex. We extend the construction to
submodular setfunctions defined on a set-algebra (rather than a sigma-algebra).
The main property of submodular setfunctions used in the proof is that they
have bounded variation. As a generalization of the convexity of the extension,
we show that (under smoothness conditions) a ``lopsided'' version of Fubini's
Theorem holds.
\end{abstract}

\section{Introduction}

Submodular setfunctions play an important role in potential theory, and perhaps
an even more important role in combinatorial optimization. The analytic line of
research goes back to the work of Choquet\cite{Choq}; the combinatorial, to the
work of Whitney \cite{Whit} and Rado \cite{Rado}, followed by the fundamental
work of Tutte \cite{Tut1,Tut2} and Edmonds \cite{Edm}. The two research lines
have not had much interaction so far.

Let $(J,\BB)$ be a sigma-algebra, and let $\Bd$ denote the Banach space of
bounded measurable functions on it, with the supremum norm. For an increasing
setfunction $\fg$ on $\BB$, Choquet introduced a non-linear functional,
extending $\fg$ from $0$-$1$ valued functions to all functions in $\Bd$. For
nonnegative functions $f\in\Bd$, this is defined by the integral
\begin{equation}\label{EQ:CHOQ-DEF0}
\wh\fg(f) = \intl_0^\infty \fg\{f\ge t\}\,dt,
\end{equation}
where $\{f\ge t\}$ is shorthand for $\{x\in J:~f(x)\ge t\}$. The functional
$\wh\fg$ is easily extended to all bounded measurable functions $f$ (see
below). The increasing property of $\fg$ is essential in this definition to
guarantee the integrability in \eqref{EQ:CHOQ-DEF0}.

One of the main classes of setfunctions to which this extension is applied in
Choquet's work consists of increasing submodular setfunctions (called
$2$-alternating by Choquet). A setfunction $\fg$ defined on a sigma-algebra
$(J,\BB)$ (say, all subsets of a finite set, or Borel subsets of $[0,1]$) is
{\it submodular}, if it satisfies the inequality
\begin{equation}\label{EQ:}
\fg(X\cup Y)+\fg(X\cap Y) \le \fg(X)+\fg(Y)\qquad(X,Y\in\BB).
\end{equation}
A setfunction satisfying this condition with equality for all $X$ and $Y$ is
called {\it modular}. A modular setfunction $\fg$ with $\fg(\emptyset)=0$ is
just a finitely additive signed measure, which we will call a {\it signed
charge}. A {\it charge} is a nonnegative signed charge. (See Rao and Rao
\cite{Rao} for the basics of the theory of charges.)

Perhaps the most important property of the functional $\wh\fg$, established by
Choquet, is that it is convex as a map $\Bd\to\R$ if $\fg$ is an increasing
submodular setfunction.

In the combinatorial world, an analogous extension of a setfunction $\fg$
defined on the subsets of a finite set was introduced in \cite{LL83}; this is a
rather special case, but no monotonicity was assumed. The convexity of the
extension was shown to be equivalent to the submodularity of $\fg$. Let us
point out that non-monotone submodular setfunctions play a central role in
combinatorial optimization; see \cite{Fuji} and \cite{Schr} for an in-depth
treatment of the subject and also of its history. Many of these applications
depend on the fact that the rank function of a matroid is submodular, but let
us point out that the cut-capacity function in the famous Max-Flow-Min-Cut
Theorem of Ford and Fulkerson \cite{FF} is a non-monotone submodular
setfunction. See Fujishige \cite{Fuji}, Schrijver \cite{Schr} and Frank
\cite{Frank11} for more.

Motivated by the goal of developing a limit theory of matroids, analogous to
the limit theory of graphs (see \cite{HomBook}), several aspects of the
interaction between the combinatorial and analytic theories of submodular
setfunctions have been formulated in \cite{Lov23}. A first crucial step is to
prove that Choquet's integral formula works for all (not necessarily
increasing) submodular setfunctions even in the analytic setting, and to show
that the resulting functional is still convex. The goal of this note is to
publish a proof of these facts. We define setfunctions with bounded variation,
show that the formula \eqref{EQ:CHOQ-DEF} works for them, and prove that every
bounded submodular setfunction has bounded variation. As a generalization of
the convexity property, we prove a ``lopsided'' version of Fubini's Theorem.
While not directly used it in this paper, but used in forthcoming applications,
we formulate most of our results in the framework of set-algebras (rather than
sigma-algebras).

\section{Choquet integrals}

\paragraph{Measurable functions on set-algebras.}
To extend our treatment to set-algebras instead of sigma-algebras leads to some
technical complications, which we need to discuss here.

If $\BB$ is a sigma-algebra, then a function $f:~J\to\R$ is $\BB$-measurable if
$\{f\ge t\}\in\BB$ for all $t\in\RR$. In the case of more general set families
$\BB$ like set-algebras, this definition would have some drawbacks; for
example, it would not imply that $-f$ is measurable. Even if we add the
condition that $\{f\le t\}\in\BB$, it would not follow that the sum of two
$\BB$-measurable functions is $\BB$-measurable. Hence we adopt the following
more general notion: we call $f$ {\it $\BB$-measurable}, if for every $s<t$
there is a set $A\in\BB$ such that $\{f\ge t\}\subseteq A\subseteq\{f\ge s\}$.
We denote by $\Bd=\Bd(\BB)$ the set of bounded $\BB$-measurable functions, and
set $\Bd_+=\{f\in\Bd:~f\ge 0\}$.

If $\{f\ge t\}\in\BB$ for all $t\in\R$, then $f$ is $\BB$-measurable, but not
the other way around. If $(J,\BB)$ is a sigma-algebra, then this notion
coincides with the traditional definition of measurability. It is known
(\cite{Rao}, Proposition 4.7.2) that for a set-algebra $(J,\BB)$, $\Bd$ is a
linear space. With the norm $\|f\|=\sup_{x\in J} |f(x)|$, the space $\Bd$ is a
Banach space.

\begin{remark}\label{REM:F-CONT}
The closely related notion of {\it $\FF$-continuous functions} was introduced
by Rao and Rao \cite{Rao}, Section 4.7. For a set-algebra $(J,\FF)$ and a
bounded function $f$, this is equivalent to $\FF$-measurability.
\end{remark}

\paragraph{Increasing setfunctions on sigma-algebras.}
We recall the definition of the integral of a bounded measurable function with
respect to an increasing setfunction on a sigma-algebra (Choquet \cite{Choq};
see also Denneberg \cite{Denn} and \v{S}ipo\v{s} \cite{Sipos}). Our main goal
in later sections is to show that for submodular setfunctions, the monotonicity
condition can be dropped, and instead of sigma-algebras, we can consider
set-algebras.

Let $(J,\BB)$ be a sigma-algebra and let $\Bd$ be the Banach space of bounded
measurable functions on $J$, with the supremum norm $\|.\|$. The ``Layer Cake
Representation'' of $f\in\Bd_+$ is the following elementary formula:
\[
f(x) = \intl_0^\infty \one_{f\ge t}(x)\,dt.
\]
Let $\fg$ be an increasing setfunction with $\fg(\emptyset)=0$. The {\it
Choquet integral} of a function $f\in\Bd_+$, motivated by the Layer Cake
Representation, is defined by
\begin{equation}\label{EQ:CHOQ-DEF}
\wh\fg(f) = \intl_0^\infty \fg\{f\ge t\}\,dt.
\end{equation}
This integral is well defined, since $\fg\{f\ge t\}$ is a bounded monotone
decreasing function of $t$, and the integrand is zero for sufficiently large
$t$. In the theory of ``nonlinear integral'' this quantity is often denoted by
$\int f\,d\fg$, but we prefer Choquet's notation $\wh\fg$.

More generally, if $f\in\Bd$ may have negative values, then we select any
$c\ge\|f\|$, and define
\begin{equation}\label{EQ:WH-DEF}
\wh\fg(f) = \intl_{-c}^c \fg\{f\ge t\}\,dt - c\fg(J) = \wh\fg(f+c)-c\fg(J).
\end{equation}
It is easy to see that this value is independent of $c$ once $c\ge\|f\|$.

For $S\in\BB$, we have $\wh\fg(\one_S)=\fg(S)$. So $\wh\fg$ can be considered
as an extension of $\fg$ from $0$-$1$ valued measurable functions to all
bounded measurable functions.  We call $\wh\fg$ the {\it Choquet extension} of
$\fg$. It is easy to see that the extension map $\fg\mapsto\wh\fg$ is linear
and monotone in the sense that if $\fg\le\psi$ on $\BB$, then
$\wh\fg\le\wh\psi$ on $\Bd_+$.

\paragraph{Increasing setfunctions on set-algebras.}
Let $(J,\BB)$ be a set-algebra, $\fg$, an increasing setfunction on $\BB$, and
$f\in\Bd_+$. Formula \eqref{EQ:CHOQ-DEF} is not necessarily meaningful, as the
level sets $\{f\ge t\}$ may not belong to $\BB$. One remedy is to consider the
following extensions of $\fg$ to $2^J$:
\begin{align*}
\fg^\ui(X)=\inf_{Y\in\BB\atop Y\supseteq X}\fg(Y)\quad\text{and}\quad
\fg^\ls(X)&=\sup_{Y\in\BB\atop Y\subseteq X}\fg(Y).
\end{align*}
(where the superscipt ``$\ui$'' refers to ``upper-infimum'' etc.), and replace
$\fg$ by either $\fg^\ls$ or by $\fg^\ui$, which agree with $\fg$ on $\BB$ and
are defined everywhere. (We note that if $\fg$ is submodular then so is
$\fg^\ui$, but $\fg^\ls$ is not submodular in general.)

\begin{lemma}\label{LEM:SET-ALG-INT}
Let $(J,\BB)$ be a set-algebra, let $\fg$ be an increasing setfunction on $\BB$
with $\fg(\emptyset)=0$, and let $f:~J\to\R$ be a bounded $\BB$-measurable
function. Then $\fg^\ui\{f\ge t\} = \fg^\ls\{f\ge t\}$ for almost all real
numbers $t$.
\end{lemma}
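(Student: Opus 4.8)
The plan is to reduce the statement to the elementary fact that a monotone function on $\R$ has only countably many points of discontinuity. First I would record two easy monotonicity observations: since $\fg$ is increasing, both $\fg^\ui$ and $\fg^\ls$ are increasing setfunctions, and moreover $\fg^\ls(X)\le\fg^\ui(X)$ for every $X\subseteq J$ (because any $Y_1,Y_2\in\BB$ with $Y_1\subseteq X\subseteq Y_2$ satisfy $\fg(Y_1)\le\fg(Y_2)$, and these are the quantities over which one takes sup and inf). Also $\fg^\ui(J)=\fg^\ls(J)=\fg(J)$ and $\fg^\ui(\emptyset)=\fg^\ls(\emptyset)=\fg(\emptyset)=0$. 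Consequently the functions $u(t)=\fg^\ui\{f\ge t\}$ and $\ell(t)=\fg^\ls\{f\ge t\}$ are bounded, monotone decreasing functions of $t\in\R$ satisfying $\ell(t)\le u(t)$ everywhere, and they agree (being $0$, resp.\ $\fg(J)$) for $t>\|f\|$, resp.\ $t<-\|f\|$.

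The crucial step uses the $\BB$-measurability of $f$. Fix real numbers $s<t$. By definition of measurability there is a set $A\in\BB$ with $\{f\ge t\}\subseteq A\subseteq\{f\ge s\}$. From $A\in\BB$ and $A\supseteq\{f\ge t\}$ we get $u(t)\le\fg(A)$, and from $A\in\BB$ and $A\subseteq\{f\ge s\}$ we get $\fg(A)\le\ell(s)$; hence $u(t)\le\ell(s)$ whenever $s<t$. Letting $s\uparrow t$ and using that $\ell$ is monotone decreasing, this gives $u(t)\le\ell(t-)$, where $\ell(t-)$ denotes the left-hand limit of $\ell$ at $t$.

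Combining the two bounds yields $\ell(t)\le u(t)\le\ell(t-)$ for every $t\in\R$. At every point $t$ where $\ell$ is continuous one has $\ell(t-)=\ell(t)$, and then the sandwich forces $u(t)=\ell(t)$. Since $\ell$ is monotone, its set of discontinuities is countable, hence of Lebesgue measure zero, and the lemma follows. I do not expect a real obstacle here; the only point requiring care is the translation of the $\BB$-measurability hypothesis into the one-sided inequality $u(t)\le\ell(s)$ for $s<t$ via the sandwiching set $A\in\BB$, together with the observation that this is exactly the information needed to control the left limit of $\ell$ and close the sandwich. (One could argue symmetrically, also deriving $\ell(t)\ge u(t+)$, but this is not needed for the conclusion.)
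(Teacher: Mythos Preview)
Your argument is correct and shares its crucial step with the paper's proof: both derive the inequality $\fg^\ui\{f\ge t\}\le\fg^\ls\{f\ge s\}$ for $s<t$ from the sandwiching set $A\in\BB$ supplied by $\BB$-measurability. The difference lies only in how the conclusion is drawn. The paper integrates the inequality in $t$, obtaining $\int_0^\infty \fg^\ui\{f\ge t\}\,dt\le \eps\fg(J)+\int_0^\infty \fg^\ls\{f\ge t\}\,dt$, lets $\eps\to0$, and then infers almost-everywhere equality from $\fg^\ls\le\fg^\ui$ together with equality of the integrals. You instead take the limit $s\uparrow t$ pointwise to get $u(t)\le\ell(t-)$ and close the sandwich at every continuity point of the monotone function $\ell$. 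Your route is slightly more elementary (no integration is needed) and yields the marginally sharper conclusion that the exceptional set is countable, not merely Lebesgue-null; the paper's integral version, on the other hand, directly delivers the identity $\wh{\fg^\ui}(f)=\wh{\fg^\ls}(f)$, which is what is actually used afterwards.
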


\begin{proof}
Trivially $\fg^\ls\le \fg^\ui$, and $\fg^\ls\le\fg\le \fg^\ui$ on $\BB$.
$\BB$-measurability of $\fg$ implies that for every $t$ and $\eps>0$ there is a
set $A\in\BB$ such that
\[
\{f\ge t\}\subseteq A \subseteq\{f\ge t-\eps\},
\]
which implies that
\[
\fg^\ui\{f\ge t\} \le \fg(A) \le \fg^\ls\{f\ge t-\eps\},
\]
and hence (assuming for simplicity that $f\ge 0$)
\begin{align*}
\intl_0^\infty \fg^\ui\{f\ge t\}\,dt &\le \intl_0^\infty \fg^\ls\{f\ge t-\eps\}\,dt
= \intl_{-\eps}^\infty \fg^\ls\{f\ge t\}\,dt\\
&= \eps\fg(J) + \intl_0^\infty \fg^\ls\{f\ge t\}\,dt.
\end{align*}
Letting $\eps\to 0$, we get that $\wh{\fg^\ui}=\wh{\fg^\ls}$, which implies the
lemma.
\end{proof}

We define $\wh\fg(f)=\wh{\fg^\ui}(f)=\wh{\fg^\ls}(f)$ for nonnegative functions
$f$. Formula $\wh\fg(f)= \wh\fg(f+c)-c\fg(J)$ ($c\ge\|f\|$) can be used to
define $\wh\fg(f)$ for all bounded functions.

\paragraph{Bounded variation.}\label{SSEC:BD-VAR}
The formula defining $\wh\fg(f)$ makes sense not only for increasing
setfunctions, but whenever $\fg\{f\ge t\}$ is an integrable function of $t$. A
necessary condition for this is that $\fg(\emptyset)=0$, which we are going to
assume. One sufficient condition for integrability is that $\fg\{f\ge t\}$ is
the difference of two bounded increasing functions of $t$. In turn, a
sufficient condition for this is that $\fg$ is the difference of two increasing
setfunctions. Analogously to the case of functions of single real variable,
this is equivalent to $\fg$ having {\it bounded variation} in the following
sense: there is a $K\in\R$ such that
\[
\sum_{i=1}^n |\fg(X_i)-\fg(X_{i-1})| \le K
\]
for every chain of subsets $\emptyset=X_0\subseteq X_1\subseteq\ldots\subseteq
X_n=J$. We denote the smallest $K$ for which this holds by $K(\fg)$. It is
clear that every setfunction with bounded variation is bounded, and every
increasing or decreasing setfunction on $\BB$ (with finite values) has bounded
variation. Every charge (finitely additive measure) has bounded variation.
Clearly setfunctions on $(J,\BB)$ with bounded variation form a linear
subspace.

\begin{lemma}\label{LEM:BD-VAR}
A setfunction on a set-algebra $(J,\BB)$ can be written as the difference of
two increasing setfunctions if and only if it has bounded variation.
\end{lemma}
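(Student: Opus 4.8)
The backward direction is routine: if $\fg = \alpha - \beta$ with $\alpha,\beta$ increasing, then along any chain $\emptyset = X_0 \subseteq \cdots \subseteq X_n = J$ we have $\sum_i |\fg(X_i) - \fg(X_{i-1})| \le \sum_i (\alpha(X_i) - \alpha(X_{i-1})) + \sum_i (\beta(X_i) - \beta(X_{i-1})) = (\alpha(J) - \alpha(\emptyset)) + (\beta(J) - \beta(\emptyset))$, so $\fg$ has bounded variation. The real content is the forward direction, and the plan is to imitate the classical construction of the positive and negative variation of a function of bounded variation on an interval. Assuming $\fg$ has bounded variation with $\fg(\emptyset) = 0$ (which we may, after subtracting a constant), I would define for each $X \in \BB$ a ``total variation up to $X$'',
\[
V(X) = \sup \sum_{i=1}^n |\fg(X_i) - \fg(X_{i-1})|,
\]
the supremum taken over all chains $\emptyset = X_0 \subseteq X_1 \subseteq \cdots \subseteq X_n = X$ inside $\BB$. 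Then $V$ is finite (bounded by $K(\fg)$), and I would check that $V$ is increasing: any chain from $\emptyset$ to $X$ extends to a chain from $\emptyset$ to $Y$ when $X \subseteq Y$ (append $Y$), so $V(X) \le V(Y)$.

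The key claim is that $\alpha := \tfrac12(V + \fg)$ and $\beta := \tfrac12(V - \fg)$ are both increasing; since $\fg = \alpha - \beta$, this finishes the proof. Equivalently, I must show that for $X \subseteq Y$ in $\BB$,
\[
|\fg(Y) - \fg(X)| \le V(Y) - V(X).
\]
To see this, fix a chain $\emptyset = X_0 \subseteq \cdots \subseteq X_n = X$ nearly achieving $V(X)$, say with sum at least $V(X) - \eps$. Appending $Y$ gives a chain from $\emptyset$ to $Y$, whose variation sum is at least $V(X) - \eps + |\fg(Y) - \fg(X)|$, and this is at most $V(Y)$. Letting $\eps \to 0$ yields $V(X) + |\fg(Y) - \fg(X)| \le V(Y)$, as desired. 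This uses only that chains in $\BB$ can be concatenated, which is automatic, and that $\emptyset, J \in \BB$, which holds for a set-algebra.

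**Main obstacle.** There is essentially no obstacle here beyond bookkeeping; the proof is a direct transcription of the one-variable Jordan decomposition, with ``chains in $\BB$'' playing the role of ``partitions of $[0,1]$''. The one point worth a moment's care is that we do not need — and should not try to use — any lattice structure or any additivity of $\fg$; the argument is purely order-theoretic, which is exactly why it works verbatim for set-algebras and without monotonicity. One should also note that $V$ itself need not be submodular even when $\fg$ is, so this decomposition does not preserve submodularity; that is fine, as the lemma only claims a decomposition into monotone pieces, and it is this weaker conclusion that feeds into the integrability of $\fg\{f \ge t\}$ needed to define $\wh\fg(f)$.
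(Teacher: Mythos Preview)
Your proof is correct and is essentially the same Jordan-decomposition argument as the paper's: the paper defines the positive and negative variations $\mu(S)=\sup\sum|\fg(X_i)-\fg(X_{i-1})|_+$ and $\nu(S)=\sup\sum|\fg(X_i)-\fg(X_{i-1})|_-$ directly, while you define the total variation $V$ and set $\alpha=\tfrac12(V+\fg)$, $\beta=\tfrac12(V-\fg)$; since for any fixed chain the positive and negative parts differ by the constant $\fg(S)$, one has $\mu=\alpha$ and $\nu=\beta$, and both monotonicity proofs reduce to the same chain-extension step.
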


For a signed charge, its positive and negative parts provide such a
decomposition.

\begin{proof}
If $\fg=\mu-\nu$, where $\mu$ and $\nu$ are increasing setfunctions, then $\mu$
and $\nu$ have bounded variation, and hence so does their difference.

Conversely, assume that $\fg$ has bounded variation. We may assume that
$\fg(\emptyset)=0$. Define $|a|_+=\max(0,a)$ and $|a|_-=\max(0, -a)$. For
$S\in\BB$, let
\begin{align}\label{EQ:ALPHA-BETA-DEF}
\mu(S)&=\sup \sum_{i=1}^n |\fg(X_i)-\fg(X_{i-1})|_+,\nonumber\\
\nu(S)&=\sup \sum_{i=1}^n |\fg(X_i)-\fg(X_{i-1})|_-,
\end{align}
where the suprema are taken over all chains $\emptyset=X_0\subset
X_1\subset\ldots\subset X_n=S$. Note that $\mu(S),\nu(S)\le K(\fg)$, and
\[
\sum_{i=1}^n |\fg(X_i)-\fg(X_{i-1})|_+ - \sum_{i=1}^n |\fg(X_i)-\fg(X_{i-1})|_-
= \sum_{i=1}^n (\fg(X_i)-\fg(X_{i-1})) = \fg(S),
\]
which implies that the suprema in \eqref{EQ:ALPHA-BETA-DEF} can be approximated
by the same chains of sets, and $\fg(S)=\mu(S)-\nu(S)$.

The setfunction $\mu$ is increasing. Indeed, let $S\subseteq T$; whenever a
sequence $\emptyset=X_0\subseteq X_1\subseteq\ldots\subseteq X_n=S$ competes in
the definition of $\mu(S)$, the sequence $\emptyset=X_0\subseteq
X_1\subseteq\ldots\subseteq X_n\subseteq X_{n+1}=T$ competes in the definition
of $\mu(T)$. Similarly $\nu$ is increasing.
\end{proof}

We call the pair $(\mu,\nu)$ above the {\it canonical decomposition} of $\fg$.
For a setfunction $\fg$ with bounded variation and $\fg(\emptyset)=0$, we
define the functional $\wh\fg:~\Bd\to\R$ by
\begin{equation}\label{EQ:WH-DEF2}
\wh\fg=\wh\mu-\wh\nu,
\end{equation}
where $\fg=\mu-\nu$ is the canonical decomposition of $\fg$. If $(J,\BB)$ is a
sigma-algebra, then formula \eqref{EQ:WH-DEF} makes sense and gives the same
value.

\paragraph{Simple properties.}
First, we consider the dependence of $\wh\fg(f)$ on $\fg$.

\begin{lemma}\label{LEM:HAT-LIN}
Let $(J,\BB)$ be a set-algebra. Then the map $\fg\mapsto\wh\fg$ is linear for
bounded setfunctions on $\BB$: if $c\in\R$, then $\wh{c\fg}=c\wh\fg$ and
$\wh{\fg+\psi}=\wh\fg+\wh\psi$.
\end{lemma}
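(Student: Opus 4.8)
The plan is to reduce everything to the increasing case, where the Choquet extension is given literally by the integral formula \eqref{EQ:CHOQ-DEF}, and where linearity can be read off directly from that formula. First I would note that scalar homogeneity $\wh{c\fg}=c\wh\fg$ is the easy half: if $c\ge 0$ and $(\mu,\nu)$ is the canonical decomposition of $\fg$, then $(c\mu,c\nu)$ is the canonical decomposition of $c\fg$ (the suprema in \eqref{EQ:ALPHA-BETA-DEF} scale by $c$), and $\wh{c\mu}=c\wh\mu$, $\wh{c\nu}=c\wh\nu$ are immediate from \eqref{EQ:CHOQ-DEF} since $\fg^\ui$ and $\fg^\ls$ also scale by $c$; hence $\wh{c\fg}=c\wh\mu-c\wh\nu=c\wh\fg$. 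For $c<0$ one writes $c\fg=(-c)(-\fg)$ and observes that negating $\fg$ swaps the roles of $\mu$ and $\nu$ in the canonical decomposition, so $\wh{-\fg}=\wh\nu-\wh\mu=-\wh\fg$; combining gives the general homogeneity.

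The substantive part is additivity $\wh{\fg+\psi}=\wh\fg+\wh\psi$. The subtlety is that the canonical decomposition does not behave additively: if $\fg=\mu_1-\nu_1$ and $\psi=\mu_2-\nu_2$ are the canonical decompositions, then $\mu_1+\mu_2$ and $\nu_1+\nu_2$ are increasing setfunctions whose difference is $\fg+\psi$, but this is \emph{not} in general the canonical decomposition of $\fg+\psi$. So I would first prove a lemma that $\wh\fg$ is well-defined independently of the chosen decomposition into increasing setfunctions: if $\fg=\mu-\nu=\mu'-\nu'$ with all four increasing, then $\mu+\nu'=\mu'+\nu$ is a single increasing setfunction, so by additivity of the Choquet extension \emph{for increasing setfunctions} (which itself needs to be established) we get $\wh\mu+\wh{\nu'}=\wh{\mu'}+\wh\nu$, i.e. $\wh\mu-\wh\nu=\wh{\mu'}-\wh{\nu'}$. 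Granting that, additivity of $\wh{\cdot}$ on setfunctions of bounded variation follows instantly: $\fg+\psi=(\mu_1+\mu_2)-(\nu_1+\nu_2)$ is \emph{a} decomposition into increasing setfunctions, so $\wh{\fg+\psi}=\wh{\mu_1+\mu_2}-\wh{\nu_1+\nu_2}=(\wh{\mu_1}+\wh{\mu_2})-(\wh{\nu_1}+\wh{\nu_2})=\wh\fg+\wh\psi$.

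Thus everything comes down to additivity of the Choquet integral \eqref{EQ:CHOQ-DEF} for increasing setfunctions, which I expect to be the main obstacle — though a mild one. On a sigma-algebra it is trivial from linearity of the ordinary integral in the integrand: $\wh{\mu+\nu}(f)=\int_0^\infty(\mu+\nu)\{f\ge t\}\,dt=\int_0^\infty\mu\{f\ge t\}\,dt+\int_0^\infty\nu\{f\ge t\}\,dt$ for $f\in\Bd_+$, and then the extension to general $f\in\Bd$ via \eqref{EQ:WH-DEF} preserves this because the correction terms $c\,\mu(J)$ and $c\,\nu(J)$ add. On a set-algebra one must instead check that $(\mu+\nu)^\ui=\mu^\ui+\nu^\ui$ — this requires a small argument since an infimum of a sum need not split, but for \emph{increasing} setfunctions one can use the same witnessing set: given $X$ and $\eps>0$, pick $Y_1\supseteq X$ with $\mu(Y_1)\le\mu^\ui(X)+\eps$ and $Y_2\supseteq X$ with $\nu(Y_2)\le\nu^\ui(X)+\eps$; then $Y_1\cap Y_2\supseteq X$ lies in $\BB$ and by monotonicity $(\mu+\nu)(Y_1\cap Y_2)\le\mu(Y_1)+\nu(Y_2)\le\mu^\ui(X)+\nu^\ui(X)+2\eps$, giving $(\mu+\nu)^\ui\le\mu^\ui+\nu^\ui$; the reverse inequality is obvious. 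With $(\mu+\nu)^\ui=\mu^\ui+\nu^\ui$ in hand, $\wh{\mu+\nu}=\wh{(\mu+\nu)^\ui}=\wh{\mu^\ui+\nu^\ui}=\wh{\mu^\ui}+\wh{\nu^\ui}=\wh\mu+\wh\nu$ by the sigma-algebra (really, the $2^J$) case, and the proof is complete.
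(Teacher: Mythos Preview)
Your proposal is correct and follows essentially the same route as the paper: establish additivity for increasing setfunctions via $(\mu+\nu)^\ui=\mu^\ui+\nu^\ui$, deduce that $\wh\fg$ is independent of the chosen decomposition into increasing parts, and then conclude additivity for arbitrary bounded-variation setfunctions. The paper states $(\mu+\nu)^\ui=\mu^\ui+\nu^\ui$ as ``easy to see'' whereas you actually supply the intersection-of-witnesses argument, and you treat the homogeneity (including the sign-flip for $c<0$) in more detail than the paper's one-line ``easy to check''; otherwise the arguments coincide.
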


\begin{proof}
The homogeneity is easy to check. For two increasing setfunctions $\phi$ and
$\psi$, it is easy to see that $(\phi+\psi)^\ui=\phi^\ui+\psi^\ui$, and hence
\[
\wh{\fg+\psi}=\wh{(\fg+\psi)^\ui} = \wh{\fg^\ui+\psi^\ui} = \wh{\fg^\ui}+\wh{\psi^\ui}=\wh\fg+\wh\psi.
\]

This implies that if we consider a general (non-increasing) setfunction $\fg$
with a decomposition $\fg=\mu_1-\nu_1$ into the difference of two increasing
setfunctions, then $\wh\fg=\wh\mu_1-\wh\nu_1$. Indeed, consider the canonical
decomposition $\fg=\mu-\nu$, then $\mu+\nu_1=\nu+\mu_1$, and hence
$\wh\fg=\wh\mu-\wh\nu=\wh\mu_1-\wh\nu_1$.

For two arbitrary setfunctions $\fg$ and $\psi$, let $\fg=\mu-\nu$ and
$\psi=\alpha-\beta$ be their canonical decompositions. Then $\fg+\psi =
(\mu+\alpha)-(\nu+\beta)$, and hence
\begin{align*}
\wh{\fg+\psi}=\wh{\mu+\alpha}-\wh{\nu+\beta}=\wh\mu+\wh\alpha-\wh\nu-\wh\beta=\wh\fg+\wh\psi.\tag*{$\square$}
\end{align*}
\end{proof}

Now we turn to the dependence on $f$. The functional $\wh{\fg}:~f\to\wh\fg(f)$
is, in general, not linear. It is trivially monotone increasing if $\fg$ is
increasing, but not for all setfunctions with bounded variation. It does have
some simple useful properties.

\begin{lemma}\label{LEM:HAT-CONT}
Let $\fg$ be a setfunction on a set-algebra $(J,\BB)$ with bounded variation
and with $\fg(\emptyset)=0$.

\smallskip

{\rm(a)} The functional $\wh\fg:~f\to\wh\fg(f)$ is positive homogeneous: if
$f\in\Bd$ and $c>0$, then $\wh\fg(cf)=c\wh\fg(f)$.

\smallskip

{\rm(b)} It satisfies the identities $\wh\fg(f+a)=\wh\fg(f)+a\fg(J)$ for every
real constant $a$ and $\wh\fg(-f)=-\wh{\fg^*}(f)$, where
$\fg^*(X)=\fg(J)-\fg(J\setminus X)$.

\smallskip

{\rm(c)} It has the Lipschitz property:
\[
|\wh\fg(f)-\wh\fg(g)|\le 2K(\fg)\|f-g\|
\]
for $f,g\in\Bd$.
\end{lemma}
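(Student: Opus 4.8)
The plan is to reduce all three parts to increasing setfunctions on a sigma-algebra, where $\wh\fg$ is given directly by the Choquet integral, and then to argue by change of variables. By Lemma~\ref{LEM:HAT-LIN} the map $\fg\mapsto\wh\fg$ is linear, so writing $\fg=\mu-\nu$ for the canonical decomposition of Lemma~\ref{LEM:BD-VAR} into increasing setfunctions vanishing on $\emptyset$, we have $\wh\fg=\wh\mu-\wh\nu$, and it is enough to establish (a), (b) and a one-sided form of the estimate in (c) when $\fg$ is increasing. For such a $\fg$ and $f\ge 0$, the value $\wh\fg(f)$ equals $\int_0^\infty\fg^\ui\{f\ge t\}\,dt$, which is exactly the Choquet integral of the increasing setfunction $\fg^\ui$ over the sigma-algebra $(J,2^J)$ (on which every function is measurable); so I may and will assume that $(J,\BB)$ is a sigma-algebra and work with \eqref{EQ:CHOQ-DEF} and \eqref{EQ:WH-DEF}. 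Note that for an increasing $\fg$ one has $K(\fg)=\fg(J)$.

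For (a) with $\fg$ increasing and $f\ge 0$, the substitution $t\mapsto ct$ in \eqref{EQ:CHOQ-DEF} gives $\wh\fg(cf)=c\wh\fg(f)$; the case of general $f$ follows by applying this to $f+\|f\|$ together with the shift identity in (b). To prove $\wh\fg(f+a)=\wh\fg(f)+a\fg(J)$ for increasing $\fg$, I would substitute $f+a$ into \eqref{EQ:WH-DEF} with $c\ge\|f\|+|a|$, change the variable by $s=t-a$, and observe that the integration range shifts by $a$ across regions where $\fg\{f\ge s\}$ is constantly $\fg(J)$ (for $s\le-\|f\|$) or constantly $0$ (for $s>\|f\|$); the net effect is an added term $a\fg(J)$. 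Both identities extend to arbitrary $\fg$ of bounded variation by linearity of $\fg\mapsto\wh\fg$ (with $a\mu(J)-a\nu(J)=a\fg(J)$).

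For the reflection identity in (b), note that $\fg\mapsto\fg^*$ is linear and sends increasing setfunctions vanishing on $\emptyset$ to increasing setfunctions vanishing on $\emptyset$ (if $X\subseteq Y$ then $J\setminus X\supseteq J\setminus Y$, so $\fg^*(X)\le\fg^*(Y)$); hence, again by Lemma~\ref{LEM:HAT-LIN}, it suffices to prove $\wh\fg(-f)=-\wh{\fg^*}(f)$ for increasing $\fg$. Substituting into \eqref{EQ:WH-DEF}, using $\{-f\ge t\}=\{f\le -t\}$ and $\fg^*(J)=\fg(J)$, and flipping the variable, one gets $\wh\fg(-f)=\int_{-c}^c\fg\{f\le s\}\,ds-c\fg(J)$ and $-\wh{\fg^*}(f)=\int_{-c}^c\fg\{f<s\}\,ds-c\fg(J)$, so everything comes down to $\int_{-c}^c\big(\fg\{f\le s\}-\fg\{f<s\}\big)\,ds=0$. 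The integrand is nonnegative and vanishes at every continuity point of the bounded monotone function $s\mapsto\fg\{f\le s\}$: at such a point $\fg\{f<s\}$ is trapped between $\sup_{s'<s}\fg\{f\le s'\}$ and $\fg\{f\le s\}$, and these agree. Since a monotone function has only countably many discontinuities, the integral is $0$. This measure-zero bookkeeping — the need to distinguish $\{f\le s\}$ from $\{f<s\}$ for setfunctions that need not be continuous from below — is the one genuinely delicate point of the whole argument.

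Finally, for (c): when $\fg$ is increasing, $\wh\fg$ is monotone on $\Bd$ (immediate from \eqref{EQ:CHOQ-DEF}--\eqref{EQ:WH-DEF}), so with $\delta=\|f-g\|$ and $g-\delta\le f\le g+\delta$ the shift identity yields $\wh\fg(g)-\delta\fg(J)\le\wh\fg(f)\le\wh\fg(g)+\delta\fg(J)$, that is, $|\wh\fg(f)-\wh\fg(g)|\le\delta\,K(\fg)$. For an arbitrary $\fg$ of bounded variation with canonical decomposition $\fg=\mu-\nu$, this gives $|\wh\fg(f)-\wh\fg(g)|\le|\wh\mu(f)-\wh\mu(g)|+|\wh\nu(f)-\wh\nu(g)|\le\big(K(\mu)+K(\nu)\big)\delta$; since $\mu$ and $\nu$ are increasing, $K(\mu)=\mu(J)$ and $K(\nu)=\nu(J)$, and the proof of Lemma~\ref{LEM:BD-VAR} shows $\mu(J),\nu(J)\le K(\fg)$, whence the bound $2K(\fg)\|f-g\|$. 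Apart from the reflection subtlety, the rest of the proof is just a careful assembling of routine changes of variables via the linearity of $\fg\mapsto\wh\fg$.
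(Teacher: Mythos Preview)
Your proof is correct and follows essentially the same route as the paper's: reduce (a) and (b) to the increasing case via the linearity of $\fg\mapsto\wh\fg$ (Lemma~\ref{LEM:HAT-LIN}) and verify them by change of variables, then prove (c) via the canonical decomposition $\fg=\mu-\nu$, monotonicity plus the shift identity for $\wh\mu,\wh\nu$, and the bound $\mu(J),\nu(J)\le K(\fg)$. The paper dismisses (a) and (b) as ``straightforward to check'', whereas you spell out the reflection identity carefully (the $\{f\le s\}$ vs.\ $\{f<s\}$ issue); that extra detail is welcome but not a different argument.
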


\begin{proof}
Assertions (a) and (b) are straightforward to check for increasing
setfunctions, and follow in the general case by linearity (Lemma
\ref{LEM:HAT-LIN}). To verify (c), set $\eps=\|f-g\|$, and consider the
decomposition of $\fg$ into the difference of two increasing setfunctions
$\fg=\mu-\nu$, where we may assume that $\mu(\emptyset)=\nu(\emptyset)=0$, and
$\mu,\nu\le K(\fg)$. The inequalities $f\le g+\eps\le f+2\eps$ imply that
$\wh\mu(f)\le \wh\mu(g)+\eps\mu(J)\le\wh\mu(f)+2\eps\mu(J)$, and similarly
$\wh\nu(f)\le \wh\nu(g)+\eps\nu(J)\le\wh\nu(f)+2\eps\nu(J)$, which implies that
\[
|\wh\fg(f)-\wh\fg(g)|\le |\wh\mu(f)-\wh\mu(g)| + |\wh\nu(f)-\wh\nu(g)|\le \eps\mu(J)+\eps\nu(J)\le 2\eps K(\fg).
\]
\end{proof}

\paragraph{Submodularity and bounded variation.} The following is the key fact
allowing us to extend Choquet integration to non-monotone submodular
setfunctions.

\begin{theorem}\label{THM:SUBMOD-FINVAR}
Every bounded submodular setfunction on a set-algebra has bounded variation.
\end{theorem}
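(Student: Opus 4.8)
The plan is to bound $\sum_{i=1}^n |\fg(X_i)-\fg(X_{i-1})|$ over an arbitrary chain $\emptyset=X_0\subseteq X_1\subseteq\cdots\subseteq X_n=J$ by a constant depending only on $M=\sup_{X\in\BB}|\fg(X)|$ (finite since $\fg$ is bounded). The standard idea for such estimates is to split the chain into an ``up'' part and a ``down'' part. Fix a chain and let $I_+=\{i:\fg(X_i)\ge\fg(X_{i-1})\}$ and $I_-=\{i:\fg(X_i)<\fg(X_{i-1})\}$; then $\sum_i|\fg(X_i)-\fg(X_{i-1})| = \sum_{i\in I_+}(\fg(X_i)-\fg(X_{i-1})) + \sum_{i\in I_-}(\fg(X_{i-1})-\fg(X_i))$. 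It suffices to bound each of these two sums by a constant multiple of $M$. By considering $\fg$ and $-\fg$ (note $-\fg$ is supermodular, not submodular, so this symmetry is not free — see below), or better by a direct argument, it is enough to bound $\sum_{i\in I_+}(\fg(X_i)-\fg(X_{i-1}))$, i.e.\ the total positive variation along the chain.

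The key step is to use submodularity to ``telescope'' the positive increments. First I would reduce the positive-increment sum to a single nested expression: set $Y_j = \bigcup_{i\in I_+,\, i\le j} (X_i\setminus X_{i-1})$ and compare $\sum_{i\in I_+}(\fg(X_i)-\fg(X_{i-1}))$ with $\sum_{i\in I_+}\big(\fg(Y_i)-\fg(Y_{i-1})\big)$. The point is that submodularity gives, for $A\subseteq B$ and a block $C$ disjoint from $B$, the inequality $\fg(A\cup C)-\fg(A)\ge \fg(B\cup C)-\fg(B)$ (submodularity is exactly this ``decreasing marginals'' property). Applying this with $A$ being the accumulated positive blocks and $B$ being the actual current set $X_{i-1}$ (which contains $A$), each positive increment $\fg(X_i)-\fg(X_{i-1})$ is at most $\fg(A\cup(X_i\setminus X_{i-1})) - \fg(A)$, the corresponding increment along the purely-increasing nested chain of $Y_j$'s. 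But along that nested chain the increments telescope: $\sum (\fg(Y_i)-\fg(Y_{i-1})) = \fg(Y_{\max}) - \fg(\emptyset) \le 2M$. Hence $\sum_{i\in I_+}(\fg(X_i)-\fg(X_{i-1})) \le 2M$.

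For the negative increments the same argument does not transfer directly, because $-\fg$ is supermodular. Instead I would run the symmetric construction ``from the top'': accumulate the negative blocks and remove them from $J$, using submodularity in the form $\fg(B\setminus C)-\fg(B)\le \fg(A\setminus C)-\fg(A)$ for $A\supseteq B$ and $C\subseteq B$ (again just a restatement of decreasing marginals, applied to complements within $J$). This bounds $\sum_{i\in I_-}(\fg(X_{i-1})-\fg(X_i))$ by $\fg(J)-\fg(\text{something})\le 2M$ as well. Combining, $\sum_{i=1}^n|\fg(X_i)-\fg(X_{i-1})|\le 4M$, so $\fg$ has bounded variation with $K(\fg)\le 4M$.

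The main obstacle I anticipate is the bookkeeping in the ``telescoping'' step: one must verify carefully that replacing the actual sets $X_{i-1}$ by the accumulated-block sets $Y_{i-1}$ preserves the relevant inclusions so that the decreasing-marginals inequality applies at every step (in particular that $Y_{i-1}\subseteq X_{i-1}$ always holds, which is where we use that we only accumulate blocks indexed by $I_+$ and only up to step $i-1$), and that the blocks $X_i\setminus X_{i-1}$ are genuinely disjoint so that $Y_{\max}\subseteq J$ and the telescoping sum is exactly $\fg(Y_{\max})-\fg(\emptyset)$. The negative part requires the analogous care with complements. None of this is deep, but it is the place where an error would most easily creep in, so I would state the decreasing-marginals consequence of submodularity as a small preliminary before running the chain estimate.
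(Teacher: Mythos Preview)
Your argument is correct. The decreasing-marginals inequality applies exactly as you say: for $i\in I_+$ one has $Y_{i-1}\subseteq X_{i-1}$ and $C=X_i\setminus X_{i-1}$ disjoint from $X_{i-1}$, so $\fg(X_i)-\fg(X_{i-1})\le\fg(Y_i)-\fg(Y_{i-1})$, and the right-hand side telescopes (terms with $i\notin I_+$ contribute zero since then $Y_i=Y_{i-1}$). The dual ``from the top'' argument for the negative part also checks out, with $Z_j=J\setminus\bigcup_{i\in I_-,\,i>j}(X_i\setminus X_{i-1})\supseteq X_j$ playing the role of the auxiliary chain. One simplification you might note: since $P-N=\fg(J)-\fg(\emptyset)$ telescopes anyway, the total variation equals $2P-(\fg(J)-\fg(\emptyset))$, so bounding $P$ alone already suffices and the separate negative-part argument is unnecessary.

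The paper takes a different route: instead of comparing to an auxiliary chain, it iteratively modifies the given chain. Whenever $\fg(X_i)$ is a local minimum (i.e.\ $\fg(X_i)\le\fg(X_{i-1})$ and $\fg(X_i)\le\fg(X_{i+1})$), it replaces $X_i$ by $X_i'=X_{i-1}\cup(X_{i+1}\setminus X_i)$; submodularity applied to the pair $X_i,X_i'$ shows this does not decrease the variation sum and turns the local minimum into a local maximum. After finitely many such swaps the value sequence becomes unimodal, and then the variation is simply $2\fg(Y_j)-\fg(J)-\fg(\emptyset)$ for the peak index $j$. Your approach is cleaner in that it avoids iteration and termination arguments; the paper's approach has the virtue of foreshadowing the uncrossing technique used later for the subadditivity of $\wh\fg$. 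Both yield the same bound $K(\fg)\le 2\sup_X\fg(X)-\fg(J)-\fg(\emptyset)$.
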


\begin{proof}
Consider a chain of subsets $\emptyset=X_0\subseteq X_1\subseteq\ldots\subseteq
X_n=J$ $(X_i\in\BB)$. Suppose that there is an index $1\le i\le n-1$ such that
$\fg(X_i)\le \fg(X_{i-1})$ and $\fg(X_i)\le \fg(X_{i+1})$. Let
$X_i'=X_{i-1}\cup(X_{i+1}\setminus X_i)$. Then $X_i\cap X_i'= X_{i-1}$ and
$X_i\cup X_i'= X_{i+1}$, so by submodularity,
\[
\fg(X_i)+\fg(X_i') \ge \fg(X_{i-1})+\fg(X_{i+1}).
\]
This implies that $\fg(X_i')\ge \fg(X_{i-1})$ and $\fg(X_i')\ge \fg(X_{i+1})$,
and also that
\begin{align*}
|\fg(X_{i+1})&-\fg(X_i)| + |\fg(X_i)-\fg(X_{i-1})|
= \fg(X_{i+1})+\fg(X_{i-1})-2\fg(X_i)\\
&\le \fg(X_{i+1})+\fg(X_{i-1})-2(\fg(X_{i-1})+\fg(X_{i+1}) -\fg(X'_i))\\
&= 2\fg(X_i')-\fg(X_{i+1})-\fg(X_{i-1})\\
&=|\fg(X_{i+1})-\fg(X_i')| + |\fg(X_i')-\fg(X_{i-1})|.
\end{align*}
So replacing $X_i$ by $X_i'$ does not decrease the sum $\sum_i
|\fg(X_i)-\fg(X_{i-1})|$. Repeating this exchange procedure a finite number of
times, we get a sequence $\emptyset=Y_0\subseteq Y_1\subseteq\ldots\subseteq
Y_n= J$ for which $\sum_i |\fg(Y_i)-\fg(Y_{i-1})|\ge\sum_i
|\fg(X_i)-\fg(X_{i-1})|$ and there is a $0\le j\le n$ such that
$\fg(Y_0)\le\ldots\le\fg(Y_j)\ge\fg(Y_{j+1})\ge\ldots\ge\fg(Y_n)$. For such a
sequence,
\[
\sum_{i=1}^n |\fg(Y_i)-\fg(Y_{i-1})| = 2\fg(Y_j)-\fg(J)-\fg(\emptyset).
\]
Since $\fg$ is bounded, this proves that $\fg$ has bounded variation.
\end{proof}

\begin{remark}\label{RE:DECOMPOSE}
By Lemma \ref{LEM:BD-VAR} and its proof, for every bounded submodular
setfunction a canonical decomposition is defined. This decomposition can be
more explicitly stated. Following the steps of the proofs, we get that
$\fg=\psi+(\fg-\psi)$, where $\psi=\fg^\ls|_\BB$. It is easy to see that $\psi$
is increasing and $\fg-\psi$ is decreasing. It would be very nice to find such
a decomposition where both terms are submodular. In the finite case, every
submodular setfunction can be written as the sum two submodular setfunctions,
one increasing and one decreasing; however, in the infinite case a
counterexample is given in \cite{BGILS}.
\end{remark}

\section{Convexity}\label{SEC:CONV}

We prove the key fact that every submodular setfunction $\fg$ extends to a
subadditive (and, since it is positive homogeneous, convex) functional
$\wh\fg$. For the increasing case, this was proved by Choquet \cite{Choq}; see
also \v{S}ipo\v{s} \cite{Sipos} and Denneberg \cite{Denn}, Chapter 6.

\begin{theorem}\label{THM:SUBMOD-UNCROSS2}
Let $\fg$ be a bounded submodular setfunction on a set-algebra $(J,\BB)$ with
$\fg(\emptyset)=0$, and let $f,g\in\Bd$. Then
\[
\wh\fg(f+g) \le  \wh\fg(f) + \wh\fg(g).
\]
\end{theorem}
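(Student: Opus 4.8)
The natural strategy is to reduce everything to the level sets of $f$, $g$, and $f+g$, and to exploit submodularity there via an uncrossing/exchange argument reminiscent of the one used in the proof of Theorem~\ref{THM:SUBMOD-FINVAR}. By Lemma~\ref{LEM:HAT-CONT}(b), replacing $f$ by $f+a$ and $g$ by $g+b$ for suitable constants changes both sides by the same amount, so I may assume $f,g\ge 0$; then $f+g\ge 0$ as well and all three Choquet integrals are given by the layer-cake formula \eqref{EQ:CHOQ-DEF} (using $\fg^\ui$ or $\fg^\ls$ as in Lemma~\ref{LEM:SET-ALG-INT}, which agree a.e.). By Lemma~\ref{LEM:HAT-LIN} the map $\fg\mapsto\wh\fg$ is linear, and $\fg^\ui$ is submodular when $\fg$ is (as remarked after the definition of $\fg^\ui$); so it suffices to treat the case where $(J,\BB)$ is actually a sigma-algebra and $\fg=\fg^\ui$, i.e.\ the level sets genuinely lie in $\BB$. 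This strips the statement down to its combinatorial core.

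With those reductions, write $A_t=\{f\ge t\}$, $B_s=\{g\ge s\}$ and note that $\{f+g\ge r\}\supseteq A_t\cup B_s$ whenever $t+s\ge r$, and more usefully, for each fixed $r$ one has the ``sweep'' identity $\int_0^\infty \one_{\{f+g\ge r\}}$-type bookkeeping: precisely, $\wh\fg(f+g)=\int_0^\infty \fg\{f+g\ge r\}\,dr$, and I want to compare $\fg\{f+g\ge r\}$ with an integral of $\fg(A_t)+\fg(B_{r-t})$ over $t$. The clean way is the standard trick: fix $r$ and integrate over the segment $\{(t,s):t,s\ge0,\ t+s=r\}$. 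For a.e.\ $t$, $\{f+g\ge r\}\supseteq A_t\cup B_{r-t}$, and $A_t\cap B_{r-t}\subseteq \{f+g\ge r\}$ is automatic; submodularity gives $\fg(A_t)+\fg(B_{r-t})\ge \fg(A_t\cup B_{r-t})+\fg(A_t\cap B_{r-t})$. The difficulty is that $A_t\cup B_{r-t}$ is not all of $\{f+g\ge r\}$, so one needs an averaging/monotonicity argument over the whole line $t+s=r$, pushing the union up to the full level set $\{f+g\ge r\}$ and the intersection down. Here is where I expect the main obstacle: for \emph{non-monotone} $\fg$ one cannot simply say ``bigger set, bigger value.'' The resolution is to again invoke bounded variation (Theorem~\ref{THM:SUBMOD-FINVAR}) and the canonical decomposition, \emph{or} better, to run the uncrossing argument at the level of the chain of sets $A_t\cup B_{r-t}$ as $t$ ranges over $[0,r]$: this is a monotone family in neither direction, but its ``union over all $t$'' is $\{f+g\ge r\}$ and the sets $A_t\cap B_{r-t}$ behave oppositely, and a Fubini-type integration of the submodular inequality over $t$ recovers the claim. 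Concretely, I would prove the one-variable lemma: if $\emptyset=Z_0\subseteq\cdots$ is governed by submodularity along a ``product'' structure, then $\int_0^r \fg(A_t\cup B_{r-t})\,dt + \int_0^r \fg(A_t\cap B_{r-t})\,dt \le \int_0^r \fg(A_t)\,dt+\int_0^r\fg(B_{r-t})\,dt = r\cdot(\text{something}) $, and separately relate $\int_0^r \fg(A_t\cup B_{r-t})\,dt$ to $\fg\{f+g\ge r\}$.

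A cleaner route, and the one I would actually write up, avoids pinning $r$: integrate the submodular inequality
\[
\fg(A_t\cup B_s)+\fg(A_t\cap B_s)\le \fg(A_t)+\fg(B_s)
\]
over the \emph{whole} quarter-plane $t,s\ge 0$ against Lebesgue measure. The right-hand side integrates to $\big(\int_0^\infty \fg(A_t)\,dt\big)\cdot\infty$-- which diverges, so instead integrate over $t,s\ge 0$ with the substitution $r=t+s$, $t$ free, so that $\int\!\!\int_{t,s\ge0} H(A_t\cup B_s)\,dt\,ds=\int_0^\infty\!\!\big(\int_0^r H(A_t\cup B_{r-t})\,dt\big)dr$. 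The key geometric observation is that for each fixed $r$, as $t$ runs from $0$ to $r$ the sets $A_t\cup B_{r-t}$ form a family whose pointwise behaviour, integrated, reconstructs $\fg\{f+g\ge r\}$ \emph{up to} a correction that cancels against the $A_t\cap B_{r-t}$ term; this is exactly the ``lopsided Fubini'' phenomenon the abstract advertises, so I expect the theorem to be presented as a corollary of that Fubini statement, or proved by the same mechanism. The one genuinely delicate point — the place I'd slow down and be careful — is justifying the interchange of integrals and the measurability of $t\mapsto\fg(A_t\cup B_s)$ (etc.), for which I would use $\BB$-measurability of $f,g$ to sandwich the level sets between members of $\BB$, monotone decreasingness in each variable separately, and then Tonelli's theorem applied to the canonical decomposition $\fg=\mu-\nu$ from Lemma~\ref{LEM:BD-VAR}, each of $\mu,\nu$ being increasing so that the classical Choquet subadditivity and Fubini apply verbatim. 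Subtracting the two resulting inequalities for $\mu$ and $\nu$ is not valid in general, so the final assembly must keep $\mu$ and $\nu$ combined throughout — this is the real subtlety and the step most likely to need the full strength of the exchange argument from Theorem~\ref{THM:SUBMOD-FINVAR} rather than a naïve decomposition.
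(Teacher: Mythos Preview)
Your proposal is an exploration of approaches rather than a proof, and the central gap is one you yourself flag without closing. The continuous route --- integrating the submodular inequality $\fg(A_t\cup B_s)+\fg(A_t\cap B_s)\le\fg(A_t)+\fg(B_s)$ over the line $t+s=r$ or the quarter-plane --- does not recover $\fg\{f+g\ge r\}$: the sets $A_t\cup B_{r-t}$ are not contained in $\{f+g\ge r\}$ (indeed $A_0\cup B_r=J$), and the sets $A_t\cap B_{r-t}$ are contained in it but don't fill it out in any averaged sense that yields $\fg$ of the level set. For increasing $\fg$ one can rescue this by monotonicity, but as you note, that is exactly what is unavailable here. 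The fallback to the canonical decomposition $\fg=\mu-\nu$ also fails, again as you observe: $\mu$ and $\nu$ are increasing but need not be submodular (Remark~\ref{RE:DECOMPOSE} points out that a submodular increasing/decreasing decomposition can fail in the infinite case), so the classical Choquet subadditivity does not apply to them, and subtracting inequalities is illegitimate anyway. Finally, the lopsided Fubini theorem in the paper is proved \emph{using} Theorem~\ref{THM:SUBMOD-UNCROSS2}, not the other way around, so that is a circular hope.

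The idea you are missing is to discretize first. The paper reduces (via Lemma~\ref{LEM:HAT-CONT}(c) and scaling) to the case where $f$ and $g$ are nonnegative integer-valued stepfunctions, writes each as a sum of indicator functions of its level sets, and then proves a purely finite combinatorial lemma (Lemma~\ref{LEM:SUBMOD-UNCROSS0}): for any finite multiset $\HH$ of sets in $\BB$ and $h=\sum_{H\in\HH}\one_H$, one has $\wh\fg(h)\le\sum_{H\in\HH}\fg(H)$. This is proved by merging atoms to make $J$ finite and then repeatedly replacing a crossing pair $H_1,H_2$ by $H_1\cup H_2,\,H_1\cap H_2$; the sum $h$ is preserved, $\sum\fg(H)$ does not increase by submodularity, and the process terminates (e.g.\ because $\sum|H|^2$ strictly increases) at a chain, where equality holds by the layer-cake formula. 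Applying this lemma to the concatenated family of level sets of $f$ and of $g$ gives the inequality directly for stepfunctions, and approximation finishes. The uncrossing you mention in your first sentence is exactly the right instinct --- but it should be run on a \emph{finite} family of indicator summands, not on a one-parameter family of level sets.
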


Of course, the inequality generalizes to the sum of any finite number of
functions in $\Bd$. We give a proof using a basic combinatorial technique
called ``uncrossing'', illustrating the tight connection between the analytic
and combinatorial theories. To this end, we state and prove the following
special case first:

\begin{lemma}\label{LEM:SUBMOD-UNCROSS0}
Let $\fg$ be a bounded submodular setfunction on a set-algebra $(J,\BB)$ with
$\fg(\emptyset)=0$, let $H_1,\ldots,H_n\in\BB$, $a_1,\ldots a_n\in\Nbb$, and
$h=\sum_i a_i\one_{H_i}$. Then
\[
\wh\fg(h) \le \sum_{i=1}^n a_i \fg(H_i).
\]
If the sets $H_i$ form a chain, then equality holds for any setfunction $\fg$.
\end{lemma}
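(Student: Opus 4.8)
The plan is to reduce the general statement to the chain case by an uncrossing argument on the collection $H_1,\ldots,H_n$. First I would set up the framework: replace the multiset of sets where $H_i$ is repeated $a_i$ times by a single list $G_1,\ldots,G_m$ (with $m=\sum_i a_i$), so that $h=\sum_{j=1}^m \one_{G_j}$ and the target inequality reads $\wh\fg(h)\le\sum_{j=1}^m\fg(G_j)$. The key observation is that $h$ depends only on the ``profile'' of the family, i.e.\ on how many of the $G_j$ contain each point $x$; so if I replace a pair $G_j,G_k$ by $G_j\cap G_k$ and $G_j\cup G_k$, the function $h$ is unchanged, while by submodularity the right-hand side $\sum_j\fg(G_j)$ does not increase. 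I would then argue that by repeatedly uncrossing any two incomparable sets in the family, after finitely many steps the family becomes a chain $\emptyset\subseteq C_1\subseteq\ldots\subseteq C_m$ (possibly with repeats and with some $C_j=\emptyset$). For the chain case, $h$ is a nonnegative step function whose level sets are exactly the $C_j$, and a direct computation from \eqref{EQ:CHOQ-DEF} (or from \eqref{EQ:WH-DEF2} via the canonical decomposition, using Theorem~\ref{THM:SUBMOD-FINVAR} to know $\wh\fg$ is well defined) gives $\wh\fg(h)=\sum_j \fg(C_j)$ with equality, valid for any setfunction with $\fg(\emptyset)=0$. Combining, $\wh\fg(h)=\sum_j\fg(C_j)\le\sum_j\fg(G_j)=\sum_i a_i\fg(H_i)$.

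The main obstacle I anticipate is making the uncrossing termination rigorous: an arbitrary uncrossing of two incomparable sets need not obviously terminate, so I would introduce a potential function. A standard choice is $\Phi=\sum_j |G_j|$ replaced — since $J$ may be infinite — by something like $\sum_{j<k}|G_j\triangle G_k|$ or, better, to fix a linear preorder first. Concretely, I would process the sets in a way that builds the chain greedily: let $C_1$ be the intersection of all $G_j$, subtract it off, and so on; or, cleaner, observe that uncrossing the lexicographically-first incomparable pair strictly increases $\sum_{j<k}|G_j\cap G_k|$ (a bounded integer quantity when $J$ is finite) — but since $J$ need not be finite, I would instead argue more carefully. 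The cleanest route: since there are only finitely many sets $G_1,\ldots,G_m$, the Boolean subalgebra they generate is finite, so each $G_j$ is a union of finitely many atoms; uncrossing operations live inside this finite algebra, and $\sum_{j<k}|G_j\cap G_k|$ measured in atoms (with multiplicity) is a bounded nonnegative integer that strictly increases with each nontrivial uncrossing. Hence the process terminates, and a terminal family is pairwise comparable, i.e.\ a chain.

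A subtle point to handle: when we uncross, the new sets must still lie in $\BB$ — this is automatic since $\BB$ is a set-algebra, closed under finite unions and intersections. Another point: after uncrossing we may produce $C_j=\emptyset$; these contribute $\fg(\emptyset)=0$ on the right and nothing to $h$, so they are harmless. I would also spell out the chain computation: if $\emptyset\subseteq C_1\subseteq\ldots\subseteq C_m$ and $h=\sum_j\one_{C_j}$, then for $t\in(m-j, m-j+1]$ we have $\{h\ge t\}=C_j$ (indexing so $C_j$ is the $j$-th largest), hence $\wh\fg(h)=\int_0^\infty \fg\{h\ge t\}\,dt=\sum_j\fg(C_j)$; when $\fg$ is merely of bounded variation rather than increasing, the same identity follows from linearity of $\fg\mapsto\wh\fg$ (Lemma~\ref{LEM:HAT-LIN}) applied to the canonical decomposition, since it holds for each increasing piece. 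This gives the claimed equality in the chain case for arbitrary $\fg$ with $\fg(\emptyset)=0$, and the inequality in general.
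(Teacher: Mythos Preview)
Your overall strategy matches the paper's exactly: reduce to a finite ground set by passing to the atoms of the Boolean algebra generated by the $G_j$, uncross incomparable pairs (which preserves $h$ and, by submodularity, does not increase $\sum_j\fg(G_j)$), and verify equality in the chain case directly from the layer-cake formula. The chain computation and the handling of empty sets and closure of $\BB$ are fine.

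However, your termination argument has a concrete error: the quantity $\sum_{j<k}|G_j\cap G_k|$ (measured in atoms) is \emph{invariant} under uncrossing, not strictly increasing. Indeed, for any atom $x$ with $h(x)=d$ we have $\sum_{j<k}\one(x\in G_j)\one(x\in G_k)=\binom{d}{2}$, so $\sum_{j<k}|G_j\cap G_k|=\sum_x\binom{h(x)}{2}$ depends only on $h$, which uncrossing does not change. (The same remark kills $\sum_{j<k}|G_j\triangle G_k|$, since $|A\triangle B|=|A|+|B|-2|A\cap B|$ and both $\sum_j|G_j|$ and $\sum_{j<k}|G_j\cap G_k|$ are $h$-invariants.) The paper uses instead the potential $\sum_j|G_j|^2$: replacing an incomparable pair $G_1,G_2$ by $G_1\cap G_2,\,G_1\cup G_2$ changes this sum by $2\,|G_1\setminus G_2|\cdot|G_2\setminus G_1|>0$, so it strictly increases and is bounded above on a finite ground set, forcing termination. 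Swapping in this potential (or carrying out the greedy construction you sketch but do not commit to) repairs your argument.
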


\begin{proof}
We may assume that $J$ is finite, since we may merge the atoms of the
set-algebra generated by $\HH$ to single points. The assertion about equality
is trivial, since then the sets $H_u$ are just the level sets of $h$. For the
general case, let $\HH$ be the multiset consisting of $a_i$ copies of $H_i$,
and let $|\HH|$ be the cardinality of $\HH$ as a multiset, i.e.,
$|\HH|=\sum_ia_i$. Then $h = \sum_{H\in \HH} \one_H$, and we want to prove that
\begin{equation}\label{EQ:FBF}
\wh\fg(h) \le \sum_{H\in \HH} \fg(H).
\end{equation}
Suppose that we find two sets $H_1,H_2\in\HH$ such that neither one of them
contains the other. Replace one copy of $H_1$ and of $H_2$ by $H_1'=H_1\cup
H_2$ and $H_2'=H_1\cap H_2$, and let $\HH'$ be the resulting multiset. Then
clearly $\sum_{H\in\HH'} \one_H = h$, and
\begin{equation}\label{EQ:H-GROW}
\sum_{H\in\HH'} \fg(H) \le \sum_{H\in\HH} \fg(H).
\end{equation}
by submodularity. Let us repeat this transformation as long as we can. Since we
stay with subsets of a finite set and $|\HH|$ does not change, but the quantity
$\sum_{H\in\HH}|H|^2$ increases at each step, the procedure must stop after a
finite number of iterations with a multiset that is a chain. As remarked above,
in this case equality holds, which proves the inequality in the lemma.
\end{proof}

\begin{proof*}{Theorem \ref{THM:SUBMOD-UNCROSS2}}
By \eqref{EQ:WH-DEF}, we may assume that $f$ and $g$ are nonnegative. If they
are integer-valued stepfunctions, then we express them by their layer cake
representation, and apply Lemma \ref{LEM:SUBMOD-UNCROSS0} to get the inequality
as stated. For rational-valued stepfunctions, the inequality follows by
scaling. The general case follows via approximation by stepfunctions and Lemma
\ref{LEM:HAT-CONT}(c).
\end{proof*}

\begin{corollary}\label{COR:SUBMOD-CONV}
Let $\fg$ be a setfunction with bounded variation on a set-algebra. Then
$\wh\fg$ is a convex functional if and only if $\fg$ is submodular.
\end{corollary}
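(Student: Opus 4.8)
The plan is to prove both directions by reducing everything to Theorem \ref{THM:SUBMOD-UNCROSS2} and to the equivalence between submodularity of a setfunction and subadditivity of a positively homogeneous functional. One direction is already in hand: if $\fg$ is submodular (and bounded, which follows from bounded variation), then Theorem \ref{THM:SUBMOD-UNCROSS2} gives $\wh\fg(f+g)\le\wh\fg(f)+\wh\fg(g)$, and Lemma \ref{LEM:HAT-CONT}(a) gives positive homogeneity $\wh\fg(cf)=c\wh\fg(f)$ for $c>0$; a subadditive positively homogeneous functional is convex, since $\wh\fg(\lambda f+(1-\lambda)g)\le\wh\fg(\lambda f)+\wh\fg((1-\lambda)g)=\lambda\wh\fg(f)+(1-\lambda)\wh\fg(g)$ for $\lambda\in[0,1]$. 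So only the converse needs a real argument.

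For the converse, suppose $\wh\fg$ is convex. The key observation is that $\wh\fg$ restricted to $0$-$1$ valued functions recovers $\fg$, and more precisely, for $X,Y\in\BB$ we have $\one_{X\cup Y}+\one_{X\cap Y}=\one_X+\one_Y$. First I would use convexity together with positive homogeneity to upgrade convexity to subadditivity: for any $u,v\in\Bd$, apply convexity at $\lambda=1/2$ to the pair $2u,2v$ and use homogeneity, obtaining $\wh\fg(u+v)=2\wh\fg\bigl(\tfrac12(2u)+\tfrac12(2v)\bigr)\le 2\bigl(\tfrac12\wh\fg(2u)+\tfrac12\wh\fg(2v)\bigr)=\wh\fg(2u)/1\cdot\tfrac12\cdot 2+\cdots$; more cleanly, $\wh\fg(u+v)\le\wh\fg(u)+\wh\fg(v)$ follows directly since homogeneity makes the convex combination at any $\lambda\in(0,1)$ rescale to the sum. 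Then I would substitute $u=\one_X$ and $v=\one_Y$. The difficulty is that $\one_X+\one_Y$ is not a $0$-$1$ function where $X\cap Y\ne\emptyset$; it takes the value $2$ on $X\cap Y$. So I must evaluate $\wh\fg(\one_X+\one_Y)$ directly from the definition.

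The computation of $\wh\fg(\one_X+\one_Y)$ is the main technical point. Writing $h=\one_X+\one_Y$, its layer cake representation is $h=\one_{\{h\ge1\}}+\one_{\{h\ge2\}}=\one_{X\cup Y}+\one_{X\cap Y}$, a chain. By Lemma \ref{LEM:SUBMOD-UNCROSS0}, for a chain equality holds for \emph{any} setfunction with $\fg(\emptyset)=0$: $\wh\fg(\one_X+\one_Y)=\fg(X\cup Y)+\fg(X\cap Y)$. (Here I should note the reduction in that lemma to a finite set-algebra works verbatim, and I may instead simply invoke \eqref{EQ:WH-DEF2} and the canonical decomposition together with the layer-cake formula to see the chain case directly.) Combining this with subadditivity $\wh\fg(\one_X+\one_Y)\le\wh\fg(\one_X)+\wh\fg(\one_Y)=\fg(X)+\fg(Y)$ yields $\fg(X\cup Y)+\fg(X\cap Y)\le\fg(X)+\fg(Y)$, which is exactly submodularity. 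I expect the only genuine obstacle to be bookkeeping: making sure the chain-equality case of Lemma \ref{LEM:SUBMOD-UNCROSS0} is legitimately available for arbitrary (not necessarily submodular) setfunctions with $\fg(\emptyset)=0$ — which the lemma's statement explicitly grants — so that the argument does not circularly assume submodularity. Everything else is a short chain of substitutions.
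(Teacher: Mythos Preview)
Your proposal is correct and follows essentially the same route as the paper: the ``if'' direction is subadditivity (Theorem~\ref{THM:SUBMOD-UNCROSS2}) plus positive homogeneity, and the ``only if'' direction is exactly the one-line computation $\fg(S\cup T)+\fg(S\cap T)=\wh\fg(\one_S+\one_T)\le\wh\fg(\one_S)+\wh\fg(\one_T)=\fg(S)+\fg(T)$, where the first equality is the chain case of Lemma~\ref{LEM:SUBMOD-UNCROSS0} (valid for any setfunction) and the inequality comes from convexity plus positive homogeneity. Your write-up just unpacks these steps more explicitly than the paper does.
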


\begin{proof}
The ``if'' part follows immediately by the homogeneity of $\wh\fg$ and Theorem
\ref{THM:SUBMOD-UNCROSS2}. To prove the ``only if'' part, suppose that $\wh\fg$
is convex. Since it is also positive homogeneous, we have
\[
\fg(S\cup T)+\fg(S\cap T) =
\wh\fg(\one_S+\one_T) \le \wh\fg(\one_S)+\wh\fg(\one_T) = \fg(S)+\fg(T).
\]
proving that $\fg$ is submodular.
\end{proof}

\section{Lopsided Fubini Theorem}\label{SEC:FUBINI}

Assuming that we are working in sigma-algebras (not merely set-algebras) and an
appropriate continuity of $\fg$, we can prove the following generalization of
Theorem \ref{THM:SUBMOD-UNCROSS2}. Let $\fg$ and $\psi$ be setfunctions defined
on the same set-algebra $(J,\BB)$. We say that a setfunction $\fg$ is {\it
uniformly continuous with respect to $\psi$}, if for every $\eps>0$ there is a
$\delta>0$ such that whenever $\psi(S\triangle T)<\delta$ ($S,T\in\BB$), then
$|\fg(S)-\fg(T)|<\eps$.

\begin{theorem}\label{THM:SUBMOD-UNCROSS3}
Let $(I,\AA,\lambda)$ and $(J,\BB,\pi)$ be probability spaces. Let $\fg\ge 0$
be a submodular setfunction on $(J,\BB)$
uniformly continuous with respect to $\pi$. Let $F:~ I\times J\to\R$ be a
bounded measurable function. Define $F_x(y)=F(x,y)$ and
\[
g(y)=\intl_I F(x,y)\,d\lambda(x).
\]
Then $\wh\fg(F_x)$ is an integrable function of $x$, and
\begin{equation}\label{EQ:MAIN1}
\wh\fg(g) \le \intl_I \wh\fg(F_x)\,d\lambda(x).
\end{equation}
\end{theorem}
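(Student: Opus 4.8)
The plan is to reduce the inequality \eqref{EQ:MAIN1} to the finite subadditivity already established in Lemma~\ref{LEM:SUBMOD-UNCROSS0}, by discretizing the integral over $(I,\AA,\lambda)$ into a finite average and then passing to the limit. First I would reduce to the case $0\le F\le 1$ by positive homogeneity and the shift identity in Lemma~\ref{LEM:HAT-CONT}(b) (here $\fg(J)$ plays the role of the total mass, and $\fg\ge 0$ keeps everything well behaved). Fix $\eps>0$ and use uniform continuity of $\fg$ with respect to $\pi$ to get the corresponding $\delta>0$. The key discretization step: partition $I$ into measurable pieces $I_1,\dots,I_m$ of equal measure $1/m$ (or nearly so) on which $F(\cdot,y)$ varies little; more precisely, since $F$ is bounded and measurable on the product, one can choose finitely many ``sample'' points or, better, finitely many measurable functions and replace $F$ by a step function in the $I$-variable. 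Concretely, pick $x_1,\dots,x_m$ (via a measurable selection / approximation argument) so that the simple function $\widetilde F(x,y)=\sum_k \one_{I_k}(x) F(x_k,y)$ satisfies $\|\widetilde F - F\|_\infty$ small in a suitable averaged sense — this is where some care is needed, because we only control $F$ in $L^\infty$, not pointwise, so the honest statement is that for fixed $y$ the average $\frac1m\sum_k F(x_k,y)$ approximates $g(y)=\int_I F(x,y)\,d\lambda(x)$.

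The heart of the argument is then the following: for each fixed $y$, Lemma~\ref{LEM:SUBMOD-UNCROSS0} (or rather Theorem~\ref{THM:SUBMOD-UNCROSS2} applied to $m$ functions) gives, for the averaged step function $h_m(y) = \frac1m\sum_{k=1}^m F(x_k,y)$,
\[
\wh\fg(h_m) = \wh\fg\Bigl(\tfrac1m\sum_k F_{x_k}\Bigr) \le \tfrac1m\sum_{k=1}^m \wh\fg(F_{x_k}).
\]
The right-hand side is a Riemann-type average of the function $x\mapsto\wh\fg(F_x)$. To turn this into \eqref{EQ:MAIN1} I need two limit passages. On the left: $h_m(\cdot)\to g(\cdot)$ pointwise (or uniformly, if the partition is chosen so that $F(\cdot,y)$ oscillates by at most $\eps$ on each $I_k$ uniformly in $y$ — possible for $F$ uniformly continuous, and in general achievable up to an $\eps$-error on a set of small $\pi$-measure), and then $\wh\fg(h_m)\to\wh\fg(g)$ by the Lipschitz bound Lemma~\ref{LEM:HAT-CONT}(c), noting $K(\fg)<\infty$ by Theorem~\ref{THM:SUBMOD-FINVAR}. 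On the right: one must show $\frac1m\sum_k \wh\fg(F_{x_k}) \to \int_I \wh\fg(F_x)\,d\lambda(x)$, which requires first that $x\mapsto\wh\fg(F_x)$ be measurable and integrable (measurability follows from Lemma~\ref{LEM:HAT-CONT}(c): $\wh\fg$ is Lipschitz, hence continuous, on $\Bd$, and $x\mapsto F_x$ is Bochner-measurable into $\Bd$ since $F$ is measurable and bounded — this is the ``integrable function of $x$'' claim in the theorem), and then that a suitable sampling scheme makes the average converge, which is exactly the standard approximation of a Bochner integral by simple functions.

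I expect the main obstacle to be the interchange of ``averaging over $I$'' with ``applying $\wh\fg$'', made rigorous: one cannot literally apply Lemma~\ref{LEM:SUBMOD-UNCROSS0} to an integral, only to finite sums, so the whole proof hinges on choosing the discretization of $\lambda$ well enough that (a) the averaged step function in the first variable is genuinely close, uniformly in $y$, to $F$ (or close enough that the error is absorbed using uniform continuity of $\fg$ with respect to $\pi$ — this is precisely why that hypothesis is needed, and why it is not needed in Theorem~\ref{THM:SUBMOD-UNCROSS2}), and simultaneously (b) the corresponding finite average of $\wh\fg(F_{x_k})$ converges to the Bochner integral $\int_I\wh\fg(F_x)\,d\lambda$. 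The cleanest route is probably: approximate $F$ by a function that is a finite sum $\sum_k \one_{A_k}(x)\,u_k(y)$ with $A_k\in\AA$ and $u_k\in\Bd(\BB)$ (a simple function valued in $\Bd$), for which both sides of \eqref{EQ:MAIN1} can be computed exactly via Lemma~\ref{LEM:SUBMOD-UNCROSS0} (after a further rational/integer scaling), and then remove the approximation using Lemma~\ref{LEM:HAT-CONT}(c) on the left and dominated convergence on the right; the uniform continuity of $\fg$ with respect to $\pi$ enters to control the sets where the approximation of $F$ in the $y$-variable is poor.
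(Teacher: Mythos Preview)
Your overall architecture---discretize the $\lambda$-integral as a finite average, apply subadditivity, pass to the limit on both sides---is exactly the paper's, but two of the steps you name do not go through as stated.

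\textbf{Measurability.} You write that ``$x\mapsto F_x$ is Bochner-measurable into $\Bd$ since $F$ is measurable and bounded''. This is false in general: take $I=J=[0,1]$ with Lebesgue measure and $F(x,y)=\one(y\ge x)$. Then $F_x=\one_{[x,1]}$ and $\|F_x-F_{x'}\|_\infty=1$ whenever $x\ne x'$, so $x\mapsto F_x$ is not separably valued in the sup norm and hence not Bochner measurable. The Lipschitz bound in Lemma~\ref{LEM:HAT-CONT}(c) is in the sup norm, so it cannot rescue measurability of $x\mapsto\wh\fg(F_x)$ here. The paper proves measurability (with respect to the $\lambda$-completion of $\AA$) by an approximation argument in the $\lambda\times\pi$-measure, and this is precisely one of the two places where the uniform continuity of $\fg$ with respect to $\pi$ is used: if $W_n\to U$ in $\lambda\times\pi$-measure, then $\pi\bigl((W_n)_x\triangle U_x\bigr)\to 0$ for a.e.\ $x$, hence $\fg((W_n)_x)\to\fg(U_x)$.

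\textbf{Limit on the left.} You propose to conclude $\wh\fg(h_m)\to\wh\fg(g)$ from $h_m\to g$ via Lemma~\ref{LEM:HAT-CONT}(c). But the Lipschitz bound is again in the sup norm, and $h_m\to g$ only holds $\pi$-almost everywhere (for a good choice of sample points), not uniformly. The paper handles this differently: writing $g=f_n+h_n$ with $h_n\to 0$ $\pi$-a.e., one has $\pi\{h_n\ge t\}\to 0$ for each $t>0$, and the uniform continuity of $\fg$ with respect to $\pi$ (not the sup-norm Lipschitz property) gives $\fg\{h_n\ge t\}\to 0$, whence $\wh\fg(h_n)\to 0$ by dominated convergence; then subadditivity gives $\wh\fg(g)\le\wh\fg(f_n)+\wh\fg(h_n)$.

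A final remark on the sampling: your ``measurable selection'' for the $x_k$ is left unspecified, and making a deterministic choice that simultaneously gives $h_m(y)\to g(y)$ for $\pi$-a.e.\ $y$ and $\tfrac1m\sum_k\wh\fg(F_{x_k})\to\int_I\wh\fg(F_x)\,d\lambda$ is not obvious. The paper sidesteps this by choosing $(x_i)$ i.i.d.\ with law $\lambda$ and invoking the Law of Large Numbers twice (once for each $y$, once for the function $x\mapsto\wh\fg(F_x)$); Fubini then guarantees that almost every sample sequence works.
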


Using the integral notation, we can write this inequality as
\[
\intl_J \intl_I F(x,y)\,d\lambda(x)\,d\fg(y) \le \intl_I \intl_J F(x,y)\,d\fg(y)\,d\lambda(x).
\]
So inequality \eqref{EQ:MAIN1} is a ``lopsided'' version of Fubini's Theorem.

\begin{proof}
By Lemma \ref{LEM:HAT-CONT}, we may assume that $0\le F\le 1$, and by scaling,
that $0\le\fg\le 1$. To prove that $\wh\fg(F_x)$ is an integrable function of
$x$, we note that it is bounded, and so it suffices to show the following
claim:

\medskip

\noindent{\bf Claim 1.} {\it $\wh\fg(F_x)$ is a measurable function of $x$ with
respect to the $\lambda$-completion $\overline{\AA}$ of $\AA$.}

\medskip

We start with proving this for indicator functions $F=\one_U$,
$U\in\AA\times\BB$. For $x\in I$, let $U_x=\{y\in J:~(x,y)\in U\}$. Claim 1 is
clearly true if $U$ is the union of a finite number product sets $S\times T$,
$S,T\in\AA$, since then $\wh\fg(F_x)$ is piecewise constant. For a general $U$,
measurability implies that there is a sequence of sets $W_n\subseteq I\times
J$, each a finite union of measurable product sets, such that $(\lambda\times
\pi)(U\triangle W_n)\to 0$ as $n\to\infty$. Hence $\pi(U_x\setminus (Y_n)_x)\to
0$ for $\lambda$-almost all $x\in I$. By the uniform continuity of $\fg$, this
implies that $\fg((Y_n)_x)\to \fg(U_x)$ for $\lambda$-almost all $x\in I$. So
$\fg(U_x)=\wh\fg(F_x)$ is an $\overline{\AA}$-measurable function of $x$.

This implies that Claim 1 also holds for any measurable stepfunction $F$.
Indeed, we can write $F=\sum_{i=1}^n a_i\one_{U_i}$ with some measurable sets
$U_1\subset U_2\subset\dots\subset U_n$ and coefficients $a_i>0$. Then
$F_x=\sum_{i=1}^n a_i\one_{(U_i)_x}$ and hence
\[
\wh\fg(F_x) =\sum_{i=1}^n a_i\fg((U_i)_x),
\]
showing that the left hand side is a measurable function of $x$.

To complete the proof of the first assertion, we can approximate every bounded
measurable function $F$ on $I\times J$ by stepfunctions $G_n$ uniformly, and
then $F_x$ is also approximated by the corresponding stepfunctions $(G_n)_x$
uniformly. By Lemma \ref{LEM:HAT-CONT}(c), the $\overline{\AA}$-measurable
functions $\wh\fg((G_n)_x)$ approximate $\wh\fg(F_x)$ uniformly, proving that
$\wh\fg(F_x)$ is $\overline{\AA}$-measurable.

\medskip

Turning to the proof of the second assertion, we need the following fact.

\medskip

\noindent{\bf Claim 2.} {\it There is a sequence of measurable functions
$f_n:~J\to[0,1]$ such that $f_n\to g$ $\pi$-almost everywhere, and
\begin{equation}\label{EQ:INEQ-N}
\limsup_{n\to\infty} \wh\fg(f_n) \le \intl_I \wh\fg(F_x)\,d\lambda(x).
\end{equation}}

Let $\xb=(x_0,x_1,\ldots)$ be an infinite sequence of points in $I$, and let
\[
f_n(\xb,y) =\frac1n\sum_{i=0}^{n-1} F(x_i,y).
\]
Then
\begin{equation}\label{EQ:FIN-INEQ}
\wh\fg(f_n(\xb,.)) \le \frac1n\sum_{i=0}^{n-1}\wh\fg(F_{x_i})
\end{equation}
by Theorem \ref{THM:SUBMOD-UNCROSS2}. Let $\XX$ be the set of pairs $(\xb,y)$
such that $\xb=(x_0,x_1,\ldots)\in I^\Nbb$, $y\in J$, and $f_n(\xb,y)\to g(y)$.
For every $y\in J$, this happens for $\lambda^\Nbb$-almost all sequences $\xb$,
so $\lambda^\Nbb\times\pi(\XX)=1$. Hence $f_n(\xb,y)\to g(y)$ holds for
$\pi$-almost all $y$ and $\lambda^\Nbb$-almost all $\xb$.

By the Law of Large Numbers,
\begin{equation}\label{EQ:WHPHI-CONV}
\frac1n\sum_{i=0}^{n-1}\wh\fg(F_{x_i})\to \intl_I \wh\fg(F_x)\,d\lambda(x)\qquad(n\to\infty)
\end{equation}
for $\lambda^\Nbb$-almost all $\xb$. So we can fix a sequence $\xb$ such that
the functions $f_n=f_n(\xb,.)$ are measurable, $f_n(y)\to g(y)$ for
$\pi$-almost all $y$, and \eqref{EQ:INEQ-N} holds. This proves the Claim.

\medskip

Fix a sequence $\xb$ and the functions $f_n$ as in the Claim. Let $h_n=g-f_n$,
then $-1\le h_n\le 1$, and by Theorem \ref{THM:SUBMOD-UNCROSS2},
$\wh\fg(g)=\wh\fg(f_n+h_n)\le\wh\fg(f_n)+\wh\fg(h_n)$. Notice that $h_n\to0$
$\pi$-almost everywhere, which implies that $\pi\{h_n\ge t\}\to 0$ for every
$t>0$. Since $\fg$ is uniformly continuous with respect to $\pi$, it follows
that $\fg\{h_n\ge t\}\to 0$ for every $t>0$. Hence by dominated convergence,
\begin{equation}\label{EQ:DOMCONV1}
\intl_0^1 \fg\{h_n\ge t\}\,dt\to 0.
\end{equation}
Similarly, $\pi\{h_n< t\}\to 0$ for every $t<0$, which implies that
$\fg\{h_n\ge t\}\to \fg(J)$. Hence
\begin{equation}\label{EQ:DOMCONV1}
\intl_{-1}^0 \fg\{h_n\ge t\}\,dt = \intl_{-1}^0 \fg\{h_n\ge t\}\,dt \to \fg(J).
\end{equation}
So
\[
\wh\fg(h_n) = \intl_{-1}^1 \fg\{h_n\ge t\}\,dt -\fg(J)
= \intl_{-1}^0 \fg\{h_n\ge t\}\,dt + \intl_0^1 \fg\{h_n\ge t\}\,dt-\fg(J)\to 0.
\]
and therefore
\[
\wh\fg(g) \le \liminf_n \wh\fg(f_n).
\]
Combined with \eqref{EQ:INEQ-N}, this proves the theorem.
\end{proof}

The continuity condition cannot be omitted, even if $\fg$ is modular, as shown
by the following example.

\begin{example}\label{EXA:NOT-FUBINI0}
Let $I$ be the interval $(0,1)$, and let $\lambda$ be the uniform measure on
$I$. For $x\in I$, let $x_i$ denote the $i$-th bit of $x$ (where tailing
all-$1$ sequences are excluded). Let $J=\{ij:~i,j\in\Nbb,~i<j\}$, $\BB=2^J$,
and let $F(x,ij)=\one(x_i=1,x_j=0)$ ($ij\in J,\,x\in I$). Clearly $F$ is
measurable.

For $x\in I$, let $A_x=\{ij\in J:~F(x,ij)=1\}$. It is easy to see that
$\cup_{x\in I} A_x = J$. On the other hand, the union of any finite family of
sets $A_x$ is a proper subset of $J$. Indeed, consider any finite set
$S\subseteq (0,1)$. There must be two integers $i<j\in\Nbb$ such that $x_i=x_j$
for every $x\in S$. But then $ij\notin A_x$ for any $x\in S$.

Let $\Cf$ be the family of all finite unions of sets $A_x$ and their subsets,
then $\Cf\subseteq 2^J$ is an ideal, which does not contain $J$. So it can be
extended to a maximal ideal $\Df$. Since $\cup \Df=J$, $\Df$ is not principal.
Let $\alpha(X)=\one(X\notin\Df)$, then $\alpha$ is a charge on $(J,2^J)$.

We have
\[
g(ij)=\intl_I F(x,ij)\,d\lambda(x) =\lambda\{x:~x_i=1,x_j=0\} = \frac14,
\]
so $\wh\alpha(g) = \frac14$. On the other hand, $\alpha(A_x)=0$, and hence
\[
\intl_I \wh\alpha(\one_{A_x})\,d\lambda(x) = \intl_I \alpha(A_x)\,d\lambda(x) = 0.
\]
So even the lopsided version of Fubini's Theorem fails for this example.
\end{example}

\paragraph{Acknowledgement.} The author is indebted to the referee for
suggesting a number of corrections, and for encouraging to formulate the
results in the more general setting of set-algebras rather than sigma-algebras.
I am also grateful to the informal research group at the Rényi Institute for
their enthusiastic participation in the research of submodularity bridging the
combinatorial and measure-theoretic lines, and thereby also inducing me to
write this paper.

\end{document}